\documentclass{amsart}
% standard ams-packages
\usepackage{amsthm}
\usepackage{amsmath}
\usepackage{amsfonts}
\usepackage{amssymb}
% for gothic letters
\usepackage[mathscr]{euscript}
% to show labels
%\usepackage[final]{showkeys}
\usepackage[hidelinks]{hyperref} %options: draft, final, hidelinks
% to use colored text
\usepackage{color}
\usepackage{pstricks}
\usepackage{pst-node}
\usepackage{pstricks-add}
%theorem-like environments
\numberwithin{equation}{section}
\theoremstyle{plain} %to change the headings as suggested by Kleiner
\newtheorem{lem}[equation]{Lemma}

\newtheorem{thm}[equation]{Theorem}

\theoremstyle{definition}

\theoremstyle{remark}

\newtheorem{rem}[equation]{Remark}

% setting the counter for the table of contents
\setcounter{tocdepth}{1} %to go into subsections in the table of
% contents

\hyphenation{} % hyphenation rules 
\def\mpush#1.{{#1}_{\sharp}} %pusch forward of a measure
\def\edges#1.{{\rm\normalfont % edges with default argument n
    Edge}({\setbox0=\hbox{#1\unskip}\ifdim\wd0=0pt n \else #1\fi})}
\def\vertices#1.{{\rm\normalfont % vertices with default argument n
    Vertex}({\setbox0=\hbox{#1\unskip}\ifdim\wd0=0pt n \else #1\fi})}
\def\hmeas#1.{\mathscr{H}^{\setbox0=\hbox{$#1\unskip$}\ifdim\wd0=0pt 1
    \else #1\fi}} %Hausdorff measure with default argument by
\def\mrest{{\,\evansgariepyrest}}%restriction of a measure
\newcommand{\evansgariepyrest}{\mathbin{\vrule height 1.3ex depth%
    0pt width 0.08ex\vrule height 0.08ex depth 0pt width 1.0ex}}
\def\natural{{\mathbb N}} %natural numbers
\def\cantor{{\mathcal C}} %Cantor set
\DeclareMathOperator\reg{Reg} %regular set of points
\def\real{{\mathbb{R}}}%real numbers
\begin{document}
%title information
\title{Poincar\'e inequalities for mutually singular measures}
\author{Andrea Schioppa}
\address{NYU/Courant Institute}
\email{schioppa@cims.nyu.edu}
\keywords{Poincar\'e inequality,
  differentiable structure}
\subjclass[2010]{26D10}
%26D10=Inequalities involving derivatives and differential and integral operators
\begin{abstract}
  Using an inverse system of metric graphs as in
  \cite{cheeger_inverse_poinc}, we provide a simple example of a metric space $X$ that
  admits Poincar\'e inequalities for a continuum of mutually singular measures.
\end{abstract}
\maketitle
\tableofcontents
\newcount\outline
\outline=0
\section{Introduction}
\label{sec:intro}

\ifnum\outline>0{\textbf{Outline}
\begin{enumerate}
\item Motivation for the problem: Poincar\'e inequality in $\real^n$
  holds for measures absolutely continuous wrt.~Lebesgue measure;
\item One might ask if this holds also in metric measure space.
\end{enumerate}}\fi
In this note we provide a simple example of a metric measure space
$X$ that satisfies abstract Poincar\'e inequalities in the sense of
Heinonen-Koskela \cite{heinonen98} for a $1$-parameter family of mutually singular
measures. By a recent announcement of M.~Cs\"ornyei and P.~Jones the
classical Rademacher's Theorem is \emph{sharp} in the sense that if
its \emph{conclusion} holds for the metric measure space
$(\real^n,\mu)$, then $\mu$ must be absolutely continuous with respect
to the Lebesgue measure.  From this, using the theory of
\emph{differentiability spaces} (compare \cite[Sec.~14]{cheeger99}), it follows that if $\mu$ is a doubling
measure on $\real^n$ such that the metric measure space
$(\real^n,\mu)$ admits an abstract Poincar\'e inequality, then the
measure $\mu$ is absolutely continuous with respect to the Lebesgue
measure. The example presented here shows that for metric measure
spaces a similar phenomenon does not hold; in particular, the measure
class for which Cheeger's generalization of Rademacher's Theorem holds is not
uniquely determined.
\begin{thm}
  \label{thm:exa}
  There is a compact geodesic metric space $X$ and there is a
  family of doubling probability measures $\{\mu_w\}_{w\in(0,1)}$ defined on
  $X$ such that:
  \begin{itemize}
  \item Each metric measure space $(X,\mu_w)$ supports a
    $(1,1)$-Poincar\'e inequality;
  \item If $w\ne w'$ the measures $\mu_w$ and $\mu_{w'}$ are mutually singular.
  \end{itemize}
\end{thm}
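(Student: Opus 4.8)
The plan is to realize $X$ as an inverse limit of metric graphs, exactly in the style of \cite{cheeger_inverse_poinc}, and to obtain the family $\{\mu_w\}_{w\in(0,1)}$ by distributing mass at the ``pipes'' of the system in the ratio $w:(1-w)$. \textbf{The inverse system.} Build finite metric graphs $G_0\leftarrow G_1\leftarrow\cdots$ with $1$-Lipschitz surjections $\pi_n\colon G_{n+1}\to G_n$: start from $G_0=[0,1]$, and given $G_n$, all of whose edges have length $4^{-n}$, form $G_{n+1}$ by subdividing each edge into four sub-edges of length $4^{-n-1}$ and then replacing two prescribed sub-edges of each (chosen as in the Laakso construction, so that at every scale a definite proportion of the graph lies in such replacements) by a \emph{pipe}: two parallel edges with the same endpoints. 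Let $\pi_n$ collapse each pipe back onto the sub-edge it replaced, set $X=\varprojlim_n(G_n,\pi_n)$ and $d(x,y)=\lim_n d_{G_n}(x_n,y_n)$, a nondecreasing bounded limit since the $\pi_n$ are $1$-Lipschitz and identify no interior points of distinct edges. As in \cite{cheeger_inverse_poinc} one checks that $X$ is a compact geodesic space, that the coordinate projections $p_n\colon X\to G_n$ are $1$-Lipschitz and surjective, and that $(X,\mu)$ supports a $(1,1)$-Poincar\'e inequality whenever $\mu$ is a doubling Borel measure compatible with the system, meaning $(p_n)_\sharp\mu$ restricts to a multiple of arc length on each edge of $G_n$.

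\textbf{The measures and doubling.} Fix $w\in(0,1)$. Give each edge of $G_0$ total mass $1$; inductively, if an edge of $G_n$ carries mass $m$, give each of its four sub-edges mass $m/4$, and when a sub-edge is replaced by a pipe split its mass $m/4$ as $wm/4$ and $(1-w)m/4$ between the two parallel edges, with the labelling of ``which parallel edge gets the factor $w$'' fixed once and for all and \emph{independently of $w$}. Taking each edge uniformly charged, this defines compatible probability measures $\mu_w^{(n)}$ on the $G_n$, hence an inverse limit $\mu_w$, a Borel probability measure on $X$. For $x\in X$ and $r\simeq 4^{-n}$, $\mu_w(B(x,r))$ is comparable, up to the bounded vertex degree of the $G_n$, to the mass of the edge of $G_n$ containing $p_n(x)$, and passing from scale $4^{-n}$ to $4^{-n+1}$ multiplies that mass by at most $4/\min(w,1-w)$ (a factor $4$ from one subdivision and $1/\min(w,1-w)$ from at most one pipe). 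Thus $\mu_w$ is doubling, with constant finite for each $w\in(0,1)$, and by the previous paragraph $(X,\mu_w)$ supports a $(1,1)$-Poincar\'e inequality. Equivalently, one verifies this inequality directly by the Semmes pipe argument: join two points of a ball by the family of curves obtained by choosing, at each pipe, one of the two parallel edges, and weight this family at each pipe in the same ratio $w:(1-w)$; since the routing is then aligned with the mass distribution of $\mu_w$, the occupation measure it produces is comparable to $\mu_w$ at the scale of the ball, which is Semmes' sufficient condition.

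\textbf{Mutual singularity.} This is where $w$ enters essentially. For $x\in X$ and $k\ge 1$, say \emph{level $k$ is a pipe level for $x$} if $p_k(x)$ lies in an edge of $G_k$ created as (half of) a pipe at the $k$-th step, and in that case let $\epsilon_k(x)\in\{0,1\}$ record which of the two parallel edges it is, using the fixed labelling above. Because $\mu_w$ is a product-type measure on the tree of edges, the events ``level $k$ is a pipe level'' are independent, each of probability $1/2$ (a value not depending on $w$, since the total mass of the replaced sub-edges does not), and conditionally on them the $\epsilon_k(x)$ are independent with $\mu_w[\epsilon_k(x)=0]=w$. By the strong law of large numbers,
\[
\lim_{N\to\infty}\frac{\#\{\,k\le N:\ k\text{ is a pipe level for }x,\ \epsilon_k(x)=0\,\}}{\#\{\,k\le N:\ k\text{ is a pipe level for }x\,\}}=w
\]
for $\mu_w$-a.e.\ $x$. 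Letting $E_w$ be the set of $x$ for which this limit exists and equals $w$, we get $\mu_w(E_w)=1$ while $E_w\cap E_{w'}=\emptyset$ when $w\ne w'$, so $\mu_w\perp\mu_{w'}$.

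\textbf{Main obstacle.} The one genuinely delicate point is the Poincar\'e inequality for the \emph{biased} measures: one must check that splitting mass unevenly at pipes does not destroy the pipe/Semmes estimate. The resolution is the observation above—the connecting curve family may be weighted in the same uneven way, so its occupation measure remains comparable to $\mu_w$ and the Poincar\'e constant may be taken to depend only on the combinatorics of the system and on $\min(w,1-w)$. The remaining ingredients—compactness and geodesicity of the inverse limit, the doubling estimate, and the law-of-large-numbers computation—are routine, the first being quoted verbatim from \cite{cheeger_inverse_poinc}.
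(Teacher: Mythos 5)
Your proposal is correct and follows the same overall strategy as the paper: the same inverse system of graphs (your ``pipes'' are an inessential variant of the paper's diamond gadget, in which the middle two of the four sub-edges are jointly replaced by two parallel length-two paths weighted $w$ and $1-w$), the same inductively defined compatible measures, and the same appeal to \cite{cheeger_inverse_poinc} (or, alternatively, a weighted pencil-of-curves argument \`a la Laakso/Semmes) for doubling and the $(1,1)$-Poincar\'e inequality — the paper is no more detailed than you are on that step. The one place where you genuinely diverge is the proof of mutual singularity. The paper codes $X$ minus the (null) set of points lying over vertices by the Cantor set $\{1,\dots,6\}^{\mathbb N}$ with a product measure, computes the Radon--Nikodym derivative $d\mu_w/d\mu_{w'}$ as a limit of edge-mass ratios $c_n$, and shows $\ln c_n/n$ converges to a negative constant by strict concavity of $\ln$ (an entropy/Kullback--Leibler argument), concluding that the derivative vanishes a.e. You instead exhibit, via the same strong-law frequencies, pairwise disjoint Borel sets $E_w$ of ``$w$-normal'' points with $\mu_w(E_w)=1$, which gives singularity directly and slightly more elementarily; your version also makes the separating sets explicit, whereas the paper's yields in addition the quantitative decay rate of the density. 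Both arguments rest on the identical Lemma (SLLN for the edge-label frequencies), so the difference is one of packaging rather than substance; just note explicitly, as the paper does, that the set of points projecting to a vertex at some level is $\mu_w$-null, so that your coding by pipe levels is defined $\mu_w$-a.e.
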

\subsection*{Acknowledgements} The author thanks B.~Kleiner for
encouraging him to write this note.
\section{The Example}
\label{sec:exa}

The goal of this Section is to prove Theorem
\ref{thm:exa}. The metric space $X$ that we consider is the example
\cite[pg.~290]{lang_plaut}, compare also
\cite[Exa.~1.2]{cheeger_inverse_l1}. We briefly recall the
construction. We build a sequence of graphs $\{X_n\}_{n\ge0}$ starting
with $X_0$, which consists of a single edge which we identify with the interval
$[0,1]$.  The graph $X_{n+1}$ is obtained from $X_n$ by subdividing
each edge of $X_n$ into four equal parts and replacing it by a
rescaled copy of the diamond graph (Figure \ref{fig:ldiamond}, where
we have labelled the edges for future reference).
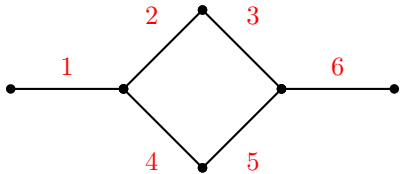
\begin{figure}[h]
  \centering
  \begin{pspicture}(4,4)
  \psset{unit=1.5}
  \def\ldiamondgraph{ % labelled diamond graph
    \psline[arrows=*-*](0,2)(1,2)
    \psline[arrows=*-*](1,2)(1.70,2.70)
    \psline[arrows=*-*](1.70,2.70)(2.40,2)
    \psline[arrows=*-*](2.40,2)(3.40,2)
    \psline[arrows=*-*](1,2)(1.70,1.30)
    \psline[arrows=*-*](1.70,1.30)(2.40,2)
    \rput(0.5,2.2){\textcolor{red}{1}}
    \rput(1.25,2.65){\textcolor{red}{2}}
    \rput(2.15,2.65){\textcolor{red}{3}}
    \rput(1.25,1.35){\textcolor{red}{4}}
    \rput(2.15,1.35){\textcolor{red}{5}}
    \rput(2.90,2.2){\textcolor{red}{6}}
  }
  \rput(0,0){\ldiamondgraph}
\end{pspicture}
\caption{Diamong graph with labelled edges}
  \label{fig:ldiamond}
\end{figure}
Each edge of $X_n$ has length $4^{-n}$ and the map which collapses the
diamond graph to a segment gives rise to $1$-Lipschitz maps
$\pi_{n+1,n}:X_{n+1}\to X_n$. In this way, the graphs
$\{X_n\}_{n\ge0}$ fit into an inverse system and, for each pair $(k,n)$
of nonnegative integers satisfying $k\le n$, one has a $1$-Lipschitz
map $\pi_{n,k}:X_n\to X_k$. The maps $\{\pi_{n,k}\}_{n\ge0,k\ge0}$
satisfy the compatibility relations:
\begin{equation}
  \label{eq:fin_compa}
  \pi_{k,m}\circ\pi_{n,k}=\pi_{n,m}.
\end{equation}
Having equipped the graphs $X_n$ with the length distance, the space
$X$ is the Gromov-Hausdorff limit of the sequence $\{X_n\}_{n\ge0}$; by the
properties of the inverse system one also gets $1$-Lipschitz maps
$\pi_{\infty,n}:X\to X_n$ which satisfy the compatibility conditions:
\begin{equation}
  \label{eq:inf_compa}
  \pi_{n,k}\circ\pi_{\infty,n}=\pi_{\infty,k}.
\end{equation}
We will let $\edges .$ and $\vertices .$ denote, respectively, the
sets of edges and vertices of $X_n$.
\par We now turn to the construction of the family of measures
$\{\mu_w\}_{w\in(0,1)}$. For $w\in(0,1)$, the measure $\mu_w$ is the
unique probability measure on $X$ which satisfies the following
requirements:
\begin{description}
\item[(R1)] For each nonnegative integer $n$, one has
  $\mpush\pi_{\infty,n}.(\mu_w)=\mu_{w,n}$ where $\mu_{w,n}$ is a
  probability measure on $X_n$;
\item[(R2)] The measure $\mu_{w,n}$ is a multiple of arclength on each
  edge of $X_n$ and $\mu_0$ is identified with the Lebesgue measure on
  $[0,1]$;
\item[(R3)] For each edge $e_n\in\edges .$, denoting by
  $\{e_{n+1,i}\}_{i=1,\cdots,6}\subset\edges n+1.$ the edges of $X_{n+1}$ whose union is
  $\pi^{-1}_{n+1,n}(e_n)$ (labelling as in Figure \ref{fig:ldiamond}), one has:
  \begin{equation}
    \label{eq:measure_diff}
    \mu_{w,n+1}\mrest e_{n+1,i} =
    \begin{cases}
      \mu_{w,n}(e_n)\,\hmeas . \mrest e_{n+1,i} & \text{if $i=1,6$}\\
      w\mu_{w,n}(e_n)\,\hmeas . \mrest e_{n+1,i} & \text{if $i=2,3$}\\
(1-w)\mu_{w,n}(e_n)\,\hmeas . \mrest e_{n+1,i} & \text{if $i=4,5$},
    \end{cases}
  \end{equation}
  where $\hmeas .$ denotes the $1$-dimensional Hausdorff measure.
\end{description}
By the main result of \cite[Thm.~1.1]{cheeger_inverse_poinc} each metric measure
space $(X,\mu_w)$ admits a $(1,1)$-Poincar\'e inequality. Note that
the class of spaces considered in \cite{cheeger_inverse_poinc} is much
broader and in this example one can also prove the $(1,1)$-Poincar\'e
inequality directly by using pencils of curves similarly as in \cite{laakso_poinc}.
\par We now turn to a probabilistic description of the points in
$X$. Let $V\subset X$ be the set of points which project to some vertex:
\begin{equation}
  \label{eq:set_v}
  V=\left\{p\in X:\exists n\ge0:\pi_{\infty,n}(p)\in\vertices .\right\};
\end{equation}
then $\mu_w(V)=0$ by conditions \textbf{(R1)} and \textbf{(R2)}.
Let $\cantor$ denote the Cantor set $\{1,\cdots,6\}^\natural$ and let
$A:\cantor\to X\setminus V$ be the map defined as follows: given $\tilde
p\in\cantor$, $A(\tilde p)$ is the unique point $p\in X\setminus V$
such that, for each $n\ge1$, $\pi_{n+1}(p)$ belongs to the edge
labelled by the integer $\tilde p(n)$ among those in
$\pi^{-1}_{n+1,n}(e_n)$ (see Figure~\ref{fig:ldiamond}), where $e_n$ is the unique edge of $X_n$
containing $\pi_n(p)$. We now define the probability measure $\nu_w$
on $\{1,\cdots,6\}$ defined as follows:
\begin{equation}
  \label{eq:prob_1_6}
  \nu_w(i)=
  \begin{cases}
    \frac{1}{4} & \text{if $i=1,6$}\\
    \frac{w}{4} & \text{if $i=2,3$}\\
    \frac{1-w}{4} & \text{if $i=4,5$};
  \end{cases}
\end{equation}
we then let $P_w$ denote the probability measure on $\cantor$ which is
the product of countably many copies of the measure $\nu_w$. We
observe that $\mpush A.P_w=\mu_w$ and we let $T_{i,n}$ denote the
random variable:
\begin{equation}
  \label{eq:random_var}
  T_{i,n}(\tilde p)=
  \begin{cases}
    1 & \text{if $\tilde p(n)=i$}\\
    0 & \text{otherwise.}
  \end{cases}
\end{equation}
\begin{lem}
  \label{lem:strong_law}
  Let $S_{i,n}=\sum_{k\le n}T_{i,k}$; then one has $P_w$-a.s.:
  \begin{equation}
    \label{eq:strong_law_s1}
    \lim_{n\to\infty}\frac{S_{i,n}}{n}=
    \begin{cases}
          \frac{1}{4} & \text{if $i=1,6$}\\
    \frac{w}{4} & \text{if $i=2,3$}\\
    \frac{1-w}{4} & \text{if $i=4,5$}.
    \end{cases}
  \end{equation}
\end{lem}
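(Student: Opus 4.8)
The plan is to recognize this as a direct application of Kolmogorov's strong law of large numbers. Under $P_w$ the coordinates $\{\tilde p(k)\}_{k\ge 1}$ are independent and identically distributed, each with law $\nu_w$ on $\{1,\dots,6\}$. For a fixed $i\in\{1,\dots,6\}$, the random variables $\{T_{i,k}\}_{k\ge 1}$ are therefore independent and identically distributed Bernoulli variables; indeed $T_{i,k}(\tilde p)=1$ precisely when $\tilde p(k)=i$, an event of $\nu_w$-probability $\nu_w(i)$, so $T_{i,k}$ has mean $\nu_w(i)$ and is bounded (hence integrable). The classical strong law then gives, $P_w$-almost surely,
\begin{equation*}
  \lim_{n\to\infty}\frac{S_{i,n}}{n}=\lim_{n\to\infty}\frac1n\sum_{k\le n}T_{i,k}=\mathbb{E}_{P_w}[T_{i,1}]=\nu_w(i),
\end{equation*}
and substituting the values of $\nu_w(i)$ from \eqref{eq:prob_1_6} yields exactly \eqref{eq:strong_law_s1}.

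To be careful about the product structure: $P_w$ is by construction the product of countably many copies of $\nu_w$ on $\cantor=\{1,\dots,6\}^{\natural}$, so the projection maps $\tilde p\mapsto\tilde p(k)$ are independent with common distribution $\nu_w$; since $T_{i,k}$ is a (measurable) function of $\tilde p(k)$ alone, the family $\{T_{i,k}\}_k$ inherits independence and identical distribution. One may instead invoke Borel's original strong law for Bernoulli trials, which suffices here since each $T_{i,k}$ is a $\{0,1\}$-valued indicator. Finally, because $\{1,\dots,6\}$ is finite, the almost-sure convergence holds simultaneously for all six values of $i$ on a common full-measure set (a finite intersection of full-measure sets is full-measure), though the statement as phrased only asks for it for each fixed $i$.

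There is essentially no obstacle here: the only point requiring a line of justification is the reduction to an i.i.d.\ sequence, i.e.\ observing that the $k$-th coordinate functional has law $\nu_w$ under the product measure $P_w$ and that $T_{i,k}$ depends only on that coordinate. Everything else is the textbook strong law of large numbers applied to bounded i.i.d.\ random variables.
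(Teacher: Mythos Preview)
Your proof is correct and follows exactly the same approach as the paper's: both simply observe that, for fixed $i$, the random variables $\{T_{i,k}\}_{k\ge1}$ are i.i.d.\ (Bernoulli with mean $\nu_w(i)$) under $P_w$ and invoke the Strong Law of Large Numbers. Your write-up merely spells out in more detail the justification for the i.i.d.\ property via the product structure of $P_w$.
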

\begin{proof}
  This follows from the Strong Law of Large Numbers as the  random
  variables $\{T_{i,n}\}_{n\ge1}$ are i.i.d.
\end{proof}
We now complete the proof of Theorem~\ref{thm:exa}:
\begin{lem}
  \label{lem:singularity}
  If $w\ne w'$ the measures $\mu_w$ and $\mu_{w'}$ are mutually singular.
\end{lem}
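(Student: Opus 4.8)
The plan is to push the question down to the Cantor set $\cantor$, where the product structure of $P_w$ makes it essentially trivial, and to feed in the Strong Law of Large Numbers of Lemma~\ref{lem:strong_law}. Because $\mpush A.P_w=\mu_w$, in order to prove $\mu_w\perp\mu_{w'}$ it is enough to produce a Borel set $B\subseteq X$ with $\mu_w(B)=1$ and $\mu_{w'}(B)=0$, and since $\mu_w(B)=P_w(A^{-1}(B))$ it is enough to produce such a $B$ whose $A$-preimage is $P_w$-conull and $P_{w'}$-null.

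Concretely I would fix the label $i=2$, for which Lemma~\ref{lem:strong_law} gives $S_{2,n}/n\to w/4$ holding $P_w$-almost surely, and set $E_w=\{\tilde p\in\cantor:\lim_{n\to\infty}S_{2,n}(\tilde p)/n=w/4\}$. This is a Borel set with $P_w(E_w)=1$. For $w\ne w'$ the sets $E_w$ and $E_{w'}$ are disjoint, since a single sequence cannot have $S_{2,n}/n$ converging to the two distinct values $w/4$ and $w'/4$; hence $E_w\subseteq\cantor\setminus E_{w'}$ and $P_{w'}(E_w)\le P_{w'}(\cantor\setminus E_{w'})=0$. Thus $P_w\perp P_{w'}$ already on $\cantor$, and all that remains is to transport the witnessing set $E_w$ to $X$ through $A$.

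This transport is the one step that needs care, and I expect it to be the main (though routine) obstacle, because $A$ is not injective: a point of $X$ sitting over a vertex shared by two child edges admits two symbol codings. The fix is to work modulo the set $V$ of Equation~\eqref{eq:set_v}. It is countable, hence Borel, and $\mu_w(V)=0$, so $A^{-1}(V)$ is $P_w$-null for every $w$; moreover on $\cantor\setminus A^{-1}(V)$ the map $A$ is a continuous bijection onto $X\setminus V$, since a point $p\in X\setminus V$ projects to a non-vertex, hence interior, point of each $X_n$, and the rule defining $A$ then determines the symbols of $p$ uniquely. As a continuous bijection of Polish spaces has Borel inverse, $A$ restricts to a Borel isomorphism $\cantor\setminus A^{-1}(V)\to X\setminus V$, so $B:=A(E_w\setminus A^{-1}(V))$ is a Borel subset of $X$ with $A^{-1}(B)=E_w\setminus A^{-1}(V)$. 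Then $\mu_w(B)=P_w(E_w\setminus A^{-1}(V))=P_w(E_w)=1$ and $\mu_{w'}(B)=P_{w'}(E_w\setminus A^{-1}(V))\le P_{w'}(E_w)=0$, which is the assertion of the lemma. If one prefers to avoid discussing inverses, one can instead observe that off $A^{-1}(V)$ each $T_{2,k}$ factors as $\phi_k\circ A$ for an explicit Borel map $\phi_k\colon X\to\{0,1\}$ reading off which of the six child edges contains the relevant projection, so that $E_w\setminus A^{-1}(V)$ is literally an $A$-preimage of a Borel set, or simply cite the Lusin--Souslin theorem.
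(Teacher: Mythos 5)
Your proof is correct, but it takes a genuinely different route from the paper's. The paper establishes singularity by showing that the Radon--Nikodym derivative $\frac{d\mu_w}{d\mu_{w'}}$ vanishes $\mu_{w'}$-a.e.: by differentiation of measures (using that $\pi_{\infty,n}^{-1}(e_n(p))$ is comparable to the ball of radius $4^{-n}$ about $p$) the density at $p$ is the limit of the cylinder ratios $\left(\frac{w}{w'}\right)^{S_{2,n}+S_{3,n}}\left(\frac{1-w}{1-w'}\right)^{S_{4,n}+S_{5,n}}$, and Lemma~\ref{lem:strong_law} combined with the strict concavity of $\ln$ shows that $\frac{1}{n}$ times the logarithm of this ratio converges to the negative number $\frac{1}{2}\left(w'\ln\frac{w}{w'}+(1-w')\ln\frac{1-w}{1-w'}\right)$. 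You instead run the classical argument that Bernoulli measures with distinct parameters are mutually singular: the sets $E_w$ and $E_{w'}$ are disjoint and each is conull for its own measure, and you then transport the witness through $A$. Both arguments rest on the same Lemma~\ref{lem:strong_law}. Yours is more elementary in that it avoids measure differentiation entirely (and hence the need to justify that the cylinders $\pi_{\infty,n}^{-1}(e_n(p))$ form a valid differentiation basis), at the cost of the Borel-isomorphism bookkeeping for $A$ off the countable set $V$, which you handle correctly via Lusin--Souslin; the paper's version buys a quantitative statement (exponential decay of the density ratio at a rate given by a relative entropy) and works directly on $X$ without moving sets across $A$. One cosmetic point: since the paper declares $A\colon\cantor\to X\setminus V$, the map is strictly speaking undefined (rather than $V$-valued) on degenerate codings such as the constant sequence $1$; your $A^{-1}(V)$ should be read as the set of codings failing to determine a point of $X\setminus V$, which is still $P_w$-null for every $w$, so the argument is unaffected.
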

\begin{proof}
  We show that the Radon-Nikodym derivative $\frac{d\mu_w}{d\mu_{w'}}$
  is null. We let $\reg(w')$ denote the set of points of $\cantor$ such that
  (\ref{eq:strong_law_s1}) holds (for the parameter $w'$). By an
  application of measure differentiation, if
  we consider $p\in X\setminus V$ and let $e_n(p)\in\edges .$ denote the unique edge of $X_n$
  containing $\pi_n(p)$, then for $\mu_{w'}$-a.e.~$p\in
  A(\reg(w'))$ we have:
  \begin{equation}
    \label{eq:singularity_p1}
    \begin{split}
    \frac{d\mu_w}{d\mu_{w'}}(p)&=\lim_{n\to\infty}\frac{\mu_{w,n}(e_n(p))}{\mu_{w',n}(e_n(p))}\\
    &=
    \lim_{n\to\infty}\underbrace{\left(\frac{w}{w'}\right)^{S_{2,n}(A^{-1}(p))+S_{3,n}(A^{-1}(p))}
    \left(\frac{1-w}{1-w'}\right)^{S_{4,n}(A^{-1}(p))+S_{5,n}(A^{-1}(p))}}_{c_n};
  \end{split}
\end{equation}
note that here we used that the set $\pi^{-1}_n(e_n(p))$ is comparable to the
ball of radius $4^{-n}$ about $p$. We now prove that
$\lim_{n\to\infty}c_n=0$ by showing that $\frac{\ln c_n}{n}$ converges to a
negative number. Applying (\ref{eq:strong_law_s1}) we find:
\begin{equation}
  \label{eq:singularity_p2}
  \lim_{n\to\infty}\frac{\ln c_n}{n}=\frac{1}{2}\left(w'\ln\frac{w}{w'}+(1-w')\ln\frac{1-w}{1-w'}\right);
\end{equation}
as the function $\ln$ is strictly concave and as $w\ne w'$,
\begin{equation}
  \label{eq:singularity_p3}
  \lim_{n\to\infty}\frac{\ln c_n}{n}<\frac{1}{2}\ln\left(w'\frac{w}{w'}+(1-w')\frac{1-w}{1-w'}\right)=0.
\end{equation}
\end{proof}
\begin{rem}
  \label{rem:diff}
  Note that each $(X,\mu_w)$ is a differentiability space in the sense
  of Cheeger \cite{cheeger99} with a unique chart
  $(X,\pi_{\infty,0})$. Then Cheeger's
  formulation of Rademacher's Theorem holds for a continuum of
  mutually singular measures.
\end{rem}
\bibliographystyle{alpha} % for letter-style
\bibliography{sing_poinc_biblio}

\begin{thebibliography}{CK13b}

\bibitem[Che99]{cheeger99}
J.~Cheeger.
\newblock Differentiability of {L}ipschitz functions on metric measure spaces.
\newblock {\em Geom. Funct. Anal.}, 9(3):428--517, 1999.

\bibitem[CK13a]{cheeger_inverse_poinc}
J.~{Cheeger} and B.~{Kleiner}.
\newblock {Inverse limit spaces satisfying a Poincare inequality}.
\newblock {\em ArXiv e-prints}, December 2013.

\bibitem[CK13b]{cheeger_inverse_l1}
Jeff Cheeger and Bruce Kleiner.
\newblock Realization of metric spaces as inverse limits, and bilipschitz
  embedding in {$L_1$}.
\newblock {\em Geom. Funct. Anal.}, 23(1):96--133, 2013.

\bibitem[HK98]{heinonen98}
Juha Heinonen and Pekka Koskela.
\newblock Quasiconformal maps in metric spaces with controlled geometry.
\newblock {\em Acta Math.}, 181(1):1--61, 1998.

\bibitem[Laa00]{laakso_poinc}
T.~J. Laakso.
\newblock Ahlfors {$Q$}-regular spaces with arbitrary {$Q>1$} admitting weak
  {P}oincar\'e inequality.
\newblock {\em Geom. Funct. Anal.}, 10(1):111--123, 2000.

\bibitem[LP01]{lang_plaut}
Urs Lang and Conrad Plaut.
\newblock Bilipschitz embeddings of metric spaces into space forms.
\newblock {\em Geom. Dedicata}, 87(1-3):285--307, 2001.

\end{thebibliography}
\end{document}